\newcommand{\be}{\begin{equation}}
\newcommand{\ee}{\end{equation}}
\newtheorem{teo}{Theorem}[section]
\numberwithin{equation}{section}
\numberwithin{figure}{section}
\newtheorem{theorem}{Theorem}[section]
\newtheorem{proposition}[theorem]{Proposition}
\newtheorem{remark}[theorem]{Remark}
\newtheorem{lemma}[theorem]{Lemma}
\newtheorem{definition}[theorem]{Definition}
\begin{document}
\vglue-1cm \hskip1cm
\title[transverse instability for the Schrödinger equation]{Transverse instability of periodic standing waves for the generalized nonlinear Schr\"odinger equation}

\begin{center}

\subjclass[2000]{35Q51, 35Q55, 35Q70.}

\keywords{transverse instability, periodic standing waves, Schrödinger equation.}

\maketitle

%
%
{\bf Gabriel E. Bittencourt Moraes$^1$ \;  Fábio Natali$^2$ }

{Departamento de Matem\'atica - Universidade Estadual de Maring\'a\\
	Avenida Colombo, 5790, CEP 87020-900, Maring\'a, PR, Brazil.}\\
{ $^1$pg54546@uem.br \\ $^2$fmanatali@uem.br}



\vspace{3mm}

\end{center}

\begin{abstract}
In this paper, we determine the transverse instability of periodic standing wave solutions for the generalized Schr\"odinger equation with fractional power nonlinearity. The existence of periodic waves is determined by using a constrained minimization problem in the complex setting, and it is shown that the corresponding real solution, depending on the power nonlinearity, is always positive or changes its sign. The transverse instability results are then determined by applying the main result given in \cite{RoussetTzvetkov} for the periodic case. 
\end{abstract}

\section{Introduction} 
Consider the nonlinear Schr\"odinger equation (NLS)
\begin{equation}\label{NLS-equation}
	i u_t + \Delta u + |u|^\alpha u = 0,
\end{equation}
where $u:\mathbb{T}_L \times \mathbb{R}\times \mathbb{R}\rightarrow\mathbb{C}$, $\alpha>0$ and $\mathbb{T}_L$ is called the $L$-torus. In our paper, functions defined in the set $\mathbb{T}_L$ satisfy $h(x+L)=h(x)$ for all $x\in\mathbb{R}$.

Transverse instability of nonlinear Schrödinger equation has been studied by some authors. Indeed, in  \cite{RoussetTzvetkov2008} and \cite{RoussetTzvetkov2009} the authors proved the transverse instability for the cubic nonlinear Schr\"odinger equation on $\mathbb{R}^2$ and $\mathbb{R} \times \mathbb{T}_L$, respectively. In \cite{Yamazaki-System}, the author used similar ideas as in \cite{RoussetTzvetkov2009} to study instability for standing waves of a system of nonlinear Schrödinger equations posed on $\mathbb{R} \times \mathbb{T}_L$ for $L>0$ in a convenient open interval. Moreover, in \cite{Yamazaki-Potential} the author studied transverse instability results associated to the equation 
\begin{equation}\label{yamazaki}
    i \partial_t u = - \Delta u + V(x) u - |u|^{p-1} u, \; \; (x,y,t) \in \mathbb{T}_L \times \mathbb{R} \times \mathbb{R},
\end{equation}
where $p>1$ and  $V:\mathbb{R} \rightarrow \mathbb{R}$ is a smooth potential.  The influence of the potential $V$ that appears in $(\ref{yamazaki})$ affects the transverse instability result of the standing wave solution $u(x,y,t) = e^{i\omega t}\tilde{\varphi}_\omega(x,y)$ in the following sense: there exist two critical values $\omega_{*,1} > \omega_{*,0}>0$ such that for $\omega \in (\omega_{*,0}, \omega_{*,1})$ such that for $0 < L \leq (\lambda_\omega)^{-\frac{1}{2}}$, the standing wave is stable. If $(\lambda_\omega)^{-\frac{1}{2}} < L$, the standing wave is unstable. Here $-\lambda_{\omega}<0$ indicates the first negative eigenvalue of a convenient linear operator. In \cite{Yamazaki-NullPotential} the same author studied the transversal stability of standing waves solutions for the equation \eqref{yamazaki} with null potential $V=0$.

Concerning the case $\mathbb{T}_{L_1}\times \mathbb{T}_{L_2}$ the authors in \cite{HakkaevStanislavovaStefanov} studied the transverse instability of standing waves of the form $u(x,y,t)=e^{i\omega t}\tilde{\varphi}(x)$ for the nonlinear Schr\"odinger equation $(\ref{NLS-equation})$
in the case where $\alpha=1$ (quadratic nonlinearity) and $\alpha=2$ (cubic nonlinearity). In both specific cases, it is well known that explicit periodic solutions $\tilde{\varphi}$ depending on the Jacobi elliptic functions are solutions of the nonlinear equation
\begin{equation}\label{eqhakka}
-\tilde{\varphi}''+\omega\tilde{\varphi}-|\tilde{\varphi}|^{\alpha}\tilde{\varphi}=0,
\end{equation}
with $\alpha=1$ or $\alpha=2$. They obtained the existence of a large period $L_{2}$ depending on the periodic solution $\tilde{\varphi}$ such that the standing wave is transversally unstable with respect to perturbations of the same period. The fact that solutions to $(\ref{eqhakka})$ are given explicitly in the cases of $\alpha=1$ and $\alpha=2$ is crucial to obtain  the transversal instability result.


We shall describe our results. In equation \eqref{NLS-equation}, we can consider periodic standing waves $u(x,y,t) = e^{i\omega t} \varphi(x)$, where $\omega>0$. Substituting this form into \eqref{NLS-equation}, we obtain
\begin{equation}\label{edo}
 -\varphi'' + \omega \varphi - |\varphi|^\alpha \varphi = 0,
\end{equation}
where $\varphi$ is a real function with minimal period $L>0$. 

Now, we consider the perturbation of $u(x,y,t)$ associated to the equation \eqref{NLS-equation} given by 
\begin{equation}\label{perturbation}
	u(x,y,t) = e^{i \omega t} \left( \varphi(x) + v(x,y,t)\right).
	\end{equation}
Substituting \eqref{perturbation} into \eqref{NLS-equation}, using \eqref{edo} and neglecting the nonlinear terms in $v$, we have that
\begin{equation}\label{NLS-equation1}
	i v_t - \omega v + v_{xx} + v_{yy} + |\varphi|^\alpha v + \alpha |\varphi|^\alpha {\rm Re}(v) = 0.
\end{equation}

Next, let us consider the separation of variables of the form
\begin{equation}\label{u-V}
	u(x,y,t) = e^{\lambda t} \cos(\kappa y) {\rm v}(x).
\end{equation}
Replacing \eqref{u-V} into \eqref{NLS-equation1}, we have
\begin{equation}\label{eq-32}
	i \lambda {\rm v}(x) - \omega {\rm v}(x) + {\rm v}(x)'' - \kappa^2 {\rm v}(x) + |\varphi|^\alpha {\rm v}(x) + \alpha |\varphi|^\alpha {\rm Re}({\rm v}(x)) = 0.
\end{equation}

On the other hand, by considering ${\rm v}(x) = v_1(x) + i v_2(x)$, we obtain by $(\ref{eq-32})$ 
the following spectral problem
$$
	 \left( 
\begin{array}{cc}
	0 & \mathcal{L}_2 + \kappa^2 \\
	-  (\mathcal{L}_1 + \kappa^2)  & 0 \\
	\end{array} 
\right) \left( 
\begin{array}{c}
	v_1 \\
	v_2 
\end{array}\right)=\lambda \left( 
	\begin{array}{c}
		v_1 \\ 
		v_2 
	\end{array}
\right),
$$
where the operators $\mathcal{L}_1, \mathcal{L}_2: H_{per}^2\subset L^2_{per} \rightarrow L^2_{per}$ are typical Hill operators defined as
\begin{equation*}\label{L1-L2}
	\mathcal{L}_1 := -\partial_x^2 + \omega - (\alpha+1)|\varphi|^\alpha \; \; \text{ and } \; \; \mathcal{L}_2:= -\partial_x^2 + \omega - |\varphi|^\alpha.
\end{equation*}
Let us consider $J = \left( \begin{array}{cc} 0 & -1 \\ 1 & 0 \end{array} \right)$ and $\mathbb{L}_{per}^2 = L^2_{per} \times L^2_{per}$. We have that $J^{-1} = J^* = -J$ and for $\left( \begin{array}{c} v_1 \\ v_2 \end{array} \right) = J \left( \begin{array}{c} w_1 \\ w_2 \end{array} \right)$, we get
\begin{equation}\label{Sk}
	\underbrace{ J^{-1} \left( \begin{array}{cc} \mathcal{L}_1 + \kappa^2 & 0 \\ 0 & \mathcal{L}_2 + \kappa^2 \end{array} \right) J}_{:=\mathcal{S}(\kappa)} \left( \begin{array}{c} w_1 \\ w_2 \end{array} \right)=\lambda J \left( \begin{array}{c} w_1 \\ w_2 \end{array} \right),
\end{equation}
that is, for ${\rm w} = \left( \begin{array}{c} w_1 \\ w_2 \end{array} \right)\equiv ( w_1, w_2)$, we obtain the following spectral problem 
\begin{equation}\label{L(k)A(k)}
	\mathcal{S}(\kappa) {\rm w} = \lambda J {\rm w}.
\end{equation}

\begin{definition}\label{defi}
	The periodic wave $\varphi$ is said to be transversally stable if $\sigma(\mathcal{S}(\kappa)) \subset i \mathbb{R}$ for all $\kappa > 0$. Otherwise, if $\sigma(\mathcal{S}(\kappa))$ contains a point $\lambda\in\mathbb{C}$ with ${\rm Re}(\lambda) > 0$ and $U\neq0 $ such that $(\ref{L(k)A(k)})$ is satisfied for some $\kappa>0$, the periodic wave $\varphi$ is said to be transversally unstable.
\end{definition}

From $(\ref{L(k)A(k)})$ and Definition $\ref{defi}$, we can see that the problem of  transverse stability reduces, in fact, to a spectral stability problem. To obtain that the solution $\varphi$ is transversally unstable, we are going to use the main result given by \cite{RoussetTzvetkov} whose main assumptions are:
	\begin{itemize}
 \item [\textbf{(H0)}] The linear operator $\mathcal{S}(\kappa)$ defined in $\mathbb{L}_{per}^2$ with domain in $\mathbb{H}_{per}^2$ is self-adjoint for all $\kappa\in\mathbb{R}$.
 
		\item [\textbf{(H1)}] There exists $K>0$ and $\beta > 0$ such that $\mathcal{S}(\kappa) \geq \beta {\rm Id}$ for $|\kappa| \geq K$;
		
		\item [\textbf{(H2)}] The essential spectrum $\sigma_{ess}(\mathcal{S}(\kappa))$ is included in $[c_\kappa,+\infty)$ with $c_\kappa > 0$ for $\kappa \neq 0$;
		
		\item [\textbf{(H3)}] For every $\kappa_1 \geq \kappa_2 \geq 0$, we have $\mathcal{S}(\kappa_1) \geq \mathcal{S}(\kappa_2)$. In addition, if for some $\kappa > 0$ and ${\rm w} \neq 0$, we have $\mathcal{S}(\kappa) {\rm w} = 0$, then $(\mathcal{S}'(\kappa) {\rm w},{\rm w}) > 0$ (with $\mathcal{S}'(\kappa)$ the derivative of $\mathcal{S}$ with respect to $\kappa$);
		
		\item [\textbf{(H4)}] The spectrum $\sigma(\mathcal{S}(0))$ of $\mathcal{S}(0)$ is under the form $\{-\lambda_0\} \cup I$ where $-\lambda_0 < 0$ is an isolated simple eigenvalue and $I$ is included in $[0,+\infty)$.
	\end{itemize}
Assumptions \textbf{(H0)}-\textbf{(H4)} implies that the wave $\varphi$ is transversally unstable according to the Definition $\ref{defi}$.

In order to verify assumptions \textbf{(H0)}-\textbf{(H4)}, we first need to show the existence of  $L$-periodic solutions $\varphi$ for the equation \eqref{edo}. For even periodic solutions, we need to solve the following constrained minimization problem
\begin{equation}\label{minP1}
    \inf \left\{ \mathcal{B}_\omega(u) := \frac{1}{2} \int_0^L |u_x|^2 + \omega |u|^2 dx : u \in \mathbb{H}^1_{per,even} \text{ and } \int_0^L |u|^{\alpha+2} dx = \tau \right\},
\end{equation}
where $\tau > 0$ is a fixed number and $\mathbb{H}^1_{per,even}$ denotes the space $\mathbb{H}^1_{per} = H^1_{per} \times H^1_{per}$ restricted to the even periodic functions. Furthermore and for $\alpha>0$ even integer, we can show the existence of odd periodic solutions $\varphi$ satisfying equation \eqref{edo} by solving the constrained minimization problem 
\begin{equation}\label{minP2}
    \inf \left\{ \mathcal{B}_\omega(u) := \frac{1}{2} \int_0^L |u_x|^2 + \omega |u|^2 dx : u \in \mathbb{H}^1_{per,odd} \text{ and } \int_0^L |u|^{\alpha+2} dx = \upsilon \right\},
\end{equation}
where $\upsilon > 0$ is a fixed number and $\mathbb{H}^1_{per,odd}$ denotes the space $\mathbb{H}^1_{per}$ restricted to the odd periodic functions.

The second requirement in our paper to obtain the transversal instability concerns a convenient spectral analysis over the operators $\mathcal{S}(\kappa)$ for all $\kappa\geq0$. Tools related to Floquet theory for Hill operators of the form $\mathcal{P} = -\partial_x^2 + Q(x)$ can be used to obtain the requirements outlined in \textbf{(H0)-(H4)}. Here, $Q(x)$ represents a smooth, real, and even periodic potential.

Our results are then established.

\begin{teo}\label{main-teo-evensolution}
    Let $L>0$ and $\alpha > 0$ be fixed. Let $\varphi$ be an even positive periodic solution for the equation \eqref{edo} obtained from the minimization problem $(\ref{minP1})$. There exist $\lambda > 0$, $\kappa >0 $ and ${\rm w} \in \mathbb{H}^2_{per}$ such that the spectral problem  \eqref{L(k)A(k)} is verified. In other words, the positive even periodic standing wave solution $\varphi$ for the equation \eqref{edo} is transversally unstable in the sense of the Definition \ref{defi}.
\end{teo}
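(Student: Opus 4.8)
The plan is to verify the five hypotheses \textbf{(H0)}--\textbf{(H4)} of the Rousset--Tzvetkov criterion, after which transverse instability follows at once from the cited result. The first step is to make $\mathcal{S}(\kappa)$ explicit. Since $J^{-1}=-J$, a direct conjugation gives
\begin{equation*}
	\mathcal{S}(\kappa)=\begin{pmatrix} \mathcal{L}_2+\kappa^2 & 0 \\ 0 & \mathcal{L}_1+\kappa^2 \end{pmatrix}=\mathcal{S}(0)+\kappa^2\,\mathrm{Id},
\end{equation*}
so the whole problem decouples into the two scalar Hill operators $\mathcal{L}_1,\mathcal{L}_2$, and the $\kappa$-dependence is a rigid shift by $\kappa^2$. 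With this structure, \textbf{(H0)}--\textbf{(H3)} come almost for free. Self-adjointness \textbf{(H0)} holds because $\mathcal{S}(\kappa)$ is block-diagonal with self-adjoint Hill operators on the diagonal. Each $\mathcal{L}_i$ is bounded below (bounded, smooth, even periodic potential), so for $|\kappa|\geq K$ large one has $\mathcal{L}_i+\kappa^2\geq\beta$, giving \textbf{(H1)}. On the torus each $\mathcal{L}_i+\kappa^2$ has compact resolvent, hence purely discrete spectrum and $\sigma_{ess}(\mathcal{S}(\kappa))=\emptyset$, so \textbf{(H2)} is vacuous for $\kappa\neq0$. Finally $\mathcal{S}(\kappa_1)-\mathcal{S}(\kappa_2)=(\kappa_1^2-\kappa_2^2)\,\mathrm{Id}\geq0$ and $\mathcal{S}'(\kappa)=2\kappa\,\mathrm{Id}$, so $(\mathcal{S}'(\kappa)\mathrm{w},\mathrm{w})=2\kappa\|\mathrm{w}\|^2>0$ for $\mathrm{w}\neq0$, which is \textbf{(H3)}.

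The substantive content is \textbf{(H4)}, which reduces to the spectral geometry of $\mathcal{L}_1$ and $\mathcal{L}_2$ because $\sigma(\mathcal{S}(0))=\sigma(\mathcal{L}_1)\cup\sigma(\mathcal{L}_2)$. For $\mathcal{L}_2$, equation \eqref{edo} gives $\mathcal{L}_2\varphi=0$; since $\varphi>0$ is nodeless it is the ground-state eigenfunction, so $0=\min\sigma(\mathcal{L}_2)$ is simple and $\mathcal{L}_2\geq0$. Thus $\mathcal{L}_2$ contributes no negative spectrum. For $\mathcal{L}_1$, differentiating \eqref{edo} yields $\mathcal{L}_1\varphi'=0$, so $0\in\sigma(\mathcal{L}_1)$ with odd eigenfunction $\varphi'$. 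Moreover $\mathcal{L}_1\varphi=\mathcal{L}_2\varphi-\alpha\varphi^{\alpha}\varphi=-\alpha\varphi^{\alpha+1}$, so $\langle\mathcal{L}_1\varphi,\varphi\rangle=-\alpha\int_0^L\varphi^{\alpha+2}\,dx<0$, proving that $\mathcal{L}_1$ has at least one negative eigenvalue and that its ground state is even.

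The remaining and most delicate point is to pin down the negative spectrum of $\mathcal{L}_1$ exactly. I would exploit that, the potential being even, $\mathcal{L}_1$ leaves invariant the splitting $L^2_{per}=L^2_{per,even}\oplus L^2_{per,odd}$. On the odd sector, odd $L$-periodic functions vanish at $0$ and $L/2$, so this sector is a Dirichlet problem on $[0,L/2]$; as $\varphi$ is even, positive and of minimal period $L$, it has a single maximum and minimum per period, whence $\varphi'$ has no zero in $(0,L/2)$ and Sturm oscillation theory identifies it as the Dirichlet ground state. Therefore $\mathcal{L}_1\geq0$ on the odd sector with kernel exactly $\mathrm{span}\{\varphi'\}$. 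On the even sector, the inequality $\langle\mathcal{L}_1\varphi,\varphi\rangle<0$ with $\varphi$ even gives at least one negative eigenvalue, while the constrained minimization \eqref{minP1} bounds it from above: the second variation of $\mathcal{B}_\omega$ is nonnegative on the codimension-one tangent space of the constraint, so by min-max $\mathcal{L}_1$ has at most one negative eigenvalue among even functions. Hence $\mathcal{L}_1$ has precisely one negative eigenvalue, it is even and simple, $\ker(\mathcal{L}_1)=\mathrm{span}\{\varphi'\}$, and the rest of $\sigma(\mathcal{L}_1)$ lies in $[0,+\infty)$.

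Assembling the two analyses, $\sigma(\mathcal{S}(0))$ has a single negative eigenvalue $-\lambda_0:=\min\sigma(\mathcal{L}_1)$, which is simple and, being the unique point of the spectrum below the nonnegative remainder, isolated; everything else sits in $[0,+\infty)$. This is exactly \textbf{(H4)}. The main obstacle is the sharp Morse-index count of the previous paragraph: reconciling the oscillation-theoretic placement of the zero eigenvalue (carried by $\varphi'$) with the variational upper bound coming from the even minimizer, and thereby guaranteeing that $-\lambda_0$ is genuinely simple and isolated from $0$. Once \textbf{(H0)}--\textbf{(H4)} are established, the Rousset--Tzvetkov theorem produces $\lambda>0$, $\kappa>0$ and $\mathrm{w}\in\mathbb{H}^2_{per}$ solving \eqref{L(k)A(k)}, that is, the transverse instability of $\varphi$ in the sense of Definition \ref{defi}.
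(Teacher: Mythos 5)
Your proposal is correct and takes essentially the same route as the paper: it verifies \textbf{(H0)}--\textbf{(H3)} by the same direct computations on the block-diagonal operator $\mathcal{S}(\kappa)=\mathcal{S}(0)+\kappa^2\,\mathrm{Id}$, and establishes \textbf{(H4)} by the same even/odd sector analysis — $(\mathcal{L}_1\varphi,\varphi)_{L^2}<0$ plus the min-max bound inherited from the constrained minimization \eqref{minP1} give exactly one simple negative eigenvalue of $\mathcal{L}_1$ in the even sector, $\varphi'$ is identified as the odd-sector ground state (a detail the paper asserts and you actually justify via the Dirichlet problem on $[0,L/2]$), and positivity of $\varphi$ gives $\mathcal{L}_2\geq 0$ by Krein--Rutman. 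The one unjustified step is your incidental claim that $\ker(\mathcal{L}_1)=\mathrm{span}\{\varphi'\}$: your min-max argument controls only the negative spectrum and does not exclude an even kernel element (the paper invokes a lemma of Alves--Natali \cite{AlvesNatali} for exactly this point), but the lapse is harmless here because \textbf{(H4)} constrains only the negative part of $\sigma(\mathcal{S}(0))$ and requires nothing about the kernel.
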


\begin{teo}\label{main-teo-oddsolution}
    Let $L>0$ be fixed and consider $\alpha > 0$ a fixed even integer. Let $\varphi$ be a periodic solution that changes its sign for the equation \eqref{edo} obtained from the minimization problem $(\ref{minP2})$. There exist $\lambda > 0$, $\kappa > 0$ and ${\rm w} \in \mathbb{H}^2_{per}$ such that the spectral problem  \eqref{L(k)A(k)} is verified. In other words, the odd periodic standing wave solution $\varphi$ for the equation \eqref{edo} is transversally unstable in the sense of the Definition \ref{defi}.
\end{teo}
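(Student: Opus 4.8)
The plan is to verify the Rousset--Tzvetkov hypotheses \textbf{(H0)}--\textbf{(H4)} for the self-adjoint operator $\mathcal{S}(\kappa)$, but on a suitable invariant subspace rather than on all of $\mathbb{L}^2_{per}$. Since $J$ is orthogonal, the conjugation in \eqref{Sk} diagonalizes the operator, giving $\mathcal{S}(\kappa)=\mathrm{diag}(\mathcal{L}_2+\kappa^2,\mathcal{L}_1+\kappa^2)$; in particular $\mathcal{S}(\kappa)=\mathcal{S}(0)+\kappa^2\,\mathrm{Id}$ and $\sigma(\mathcal{S}(0))=\sigma(\mathcal{L}_1)\cup\sigma(\mathcal{L}_2)$. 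Because $\alpha$ is an even integer and $\varphi$ is odd, the potentials $(\alpha+1)|\varphi|^\alpha$ and $|\varphi|^\alpha$ are smooth and \emph{even}, so $\mathcal{L}_1,\mathcal{L}_2$ preserve parity and $\mathcal{S}(\kappa)$ leaves invariant the subspace $\mathbb{X}_{oo}$ of pairs of odd functions; $J$ preserves $\mathbb{X}_{oo}$ as well. Differentiating \eqref{edo} and using oddness, one checks $\mathcal{L}_2\varphi=0$ with $\varphi$ odd, and $\mathcal{L}_1\varphi'=0$ with $\varphi'$ even. Since $\varphi$ changes sign, both $\mathcal{L}_1$ and $\mathcal{L}_2$ carry negative directions, and a short count shows the Morse index of $\mathcal{S}(0)$ on the whole space exceeds one; this is what forces me to work on $\mathbb{X}_{oo}$, where I will show the index is exactly one. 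A solution $(\lambda,\mathrm{w})$ of \eqref{L(k)A(k)} with $\mathrm{w}\in\mathbb{X}_{oo}$ and $\mathrm{Re}(\lambda)>0$ is still a genuine unstable mode, so instability on $\mathbb{X}_{oo}$ suffices for Definition \ref{defi}.

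Three of the hypotheses are essentially immediate on $\mathbb{X}_{oo}$. \textbf{(H0)} holds since $\mathcal{S}(\kappa)$ is diagonal with self-adjoint Hill operators as entries. As the domain is the torus, $\mathcal{L}_1,\mathcal{L}_2$ have compact resolvent, so the spectrum is discrete and $\sigma_{ess}=\emptyset$, giving \textbf{(H2)} trivially; the same lower semiboundedness yields \textbf{(H1)}, taking $K$ with $K^2$ larger than the modulus of the lowest eigenvalue of $\mathcal{L}_1|_{odd}$. For \textbf{(H3)}, the identity $\mathcal{S}(\kappa)=\mathcal{S}(0)+\kappa^2\,\mathrm{Id}$ gives monotonicity in $\kappa$ at once, while $\mathcal{S}'(\kappa)=2\kappa\,\mathrm{Id}$ makes the transversality condition $(\mathcal{S}'(\kappa)\mathrm{w},\mathrm{w})=2\kappa\|\mathrm{w}\|^2>0$ automatic for $\kappa>0$ and $\mathrm{w}\neq 0$.

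The crux is \textbf{(H4)}: I must show that $\mathcal{S}(0)|_{\mathbb{X}_{oo}}=\mathrm{diag}(\mathcal{L}_2|_{odd},\mathcal{L}_1|_{odd})$ has exactly one negative eigenvalue, simple and isolated, with the remaining spectrum in $[0,+\infty)$. For $\mathcal{L}_2|_{odd}$ I will argue that the odd sign-changing minimizer $\varphi$ has exactly two zeros per period, so by Sturm--Floquet oscillation theory it is the lowest eigenfunction of $\mathcal{L}_2$ within the odd subspace; hence $\mathcal{L}_2|_{odd}\ge 0$ with $0$ as its least eigenvalue, contributing no negative eigenvalue. For $\mathcal{L}_1|_{odd}$ I will use that $\varphi$ solves the constrained minimization problem \eqref{minP2}: the second-order optimality condition says that the Lagrangian Hessian, which one computes to be exactly $\mathcal{L}_1$, is nonnegative on the codimension-one subspace $\{v\ \text{odd}:\ \langle|\varphi|^\alpha\varphi,v\rangle=0\}$, so $\mathcal{L}_1|_{odd}$ has at most one negative eigenvalue. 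That it has at least one follows from the test function $\varphi$ itself: since $\mathcal{L}_1=\mathcal{L}_2-\alpha|\varphi|^\alpha$ and $\mathcal{L}_2\varphi=0$, one gets $\langle\mathcal{L}_1\varphi,\varphi\rangle=-\alpha\int_0^L|\varphi|^{\alpha+2}\,dx<0$. Therefore $\mathcal{L}_1|_{odd}$ has precisely one negative eigenvalue, which is simple because a parity-definite eigenfunction of a one-dimensional Hill operator is determined up to a scalar. Combining the two blocks yields exactly one negative eigenvalue, simple and isolated, for $\mathcal{S}(0)|_{\mathbb{X}_{oo}}$, establishing \textbf{(H4)}.

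With \textbf{(H0)}--\textbf{(H4)} verified on $\mathbb{X}_{oo}$, the main theorem of \cite{RoussetTzvetkov} produces $\kappa>0$, $\lambda>0$ and $\mathrm{w}\in\mathbb{X}_{oo}\subset\mathbb{H}^2_{per}$ solving \eqref{L(k)A(k)}, which is precisely transverse instability in the sense of Definition \ref{defi}. The main obstacle is \textbf{(H4)}, and within it two genuinely non-formal points: first, establishing that the minimizer $\varphi$ has exactly two zeros per period so that $\mathcal{L}_2|_{odd}\ge 0$ (a nodal/oscillation argument, or reading it off an explicit elliptic-function profile when available); and second, the recognition that the full-space Morse index is too large, which is what forces the restriction to the odd--odd invariant subspace where the index drops to exactly one.
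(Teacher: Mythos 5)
Your overall strategy is the same as the paper's: both arguments handle the excess Morse index of $\mathcal{S}(0)$ on the full space by restricting to the invariant subspace of pairs of odd functions (legitimate because $\alpha$ even and $\varphi$ odd make the potentials even, and $J$ acts componentwise), and your verifications of \textbf{(H0)}--\textbf{(H3)} coincide with Lemmas \ref{lema-H1} and \ref{lema-H3}. Your treatment of the $\mathcal{L}_1$ block of \textbf{(H4)} is also sound, and in fact more self-contained than the paper's: the paper obtains ${\rm n}(\mathcal{L}_{1,odd})\leq 1$ from the external count ${\rm n}(\mathcal{L}_1)=2$ of \cite{AlvesNatali} combined with Krein--Rutman, whereas you obtain it from the second-order optimality condition of \eqref{minP2} restricted to real perturbations; both routes then use the same test-function computation $(\mathcal{L}_{1,odd}\varphi,\varphi)_{L^2}=-\alpha\int_0^L|\varphi|^{\alpha+2}dx<0$ to conclude ${\rm n}(\mathcal{L}_{1,odd})=1$.

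The genuine gap is in the $\mathcal{L}_2$ block. Your claim $\mathcal{L}_{2,odd}\geq 0$ rests on the assertion that the minimizer $\varphi$ has exactly two zeros per period, and you leave this unproven; it does not follow from anything established about the variational problem \eqref{minP2}, and for general even $\alpha$ there is no explicit elliptic-function profile to read it off from. This is exactly the step the paper is engineered to avoid: writing $\mathcal{L}_{2,odd}=\mathcal{L}_{1,odd}+\alpha|\varphi|^\alpha$, one has $\mathcal{L}_{1,odd}<\mathcal{L}_{2,odd}$, so the comparison theorem \cite[Theorem 2.2.2]{Eastham} gives $\lambda_j^{(1)}<\lambda_j^{(2)}$ for the ordered eigenvalues of the two restricted operators. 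Since ${\rm n}(\mathcal{L}_{1,odd})=1$ forces $\lambda_1^{(1)}\geq 0$, it follows that $\lambda_1^{(2)}>0$; and since $0$ is an eigenvalue of $\mathcal{L}_{2,odd}$ (with eigenfunction $\varphi$), it must be the lowest one, $\lambda_0^{(2)}=0$. Hence ${\rm n}(\mathcal{L}_{2,odd})=0$ with simple kernel, and no nodal information about $\varphi$ is ever needed. Alternatively, you could close the gap inside your own variational framework: apply the second-order optimality condition of \eqref{minP2} to \emph{complex} perturbations rather than only real ones. The Hessian of the Lagrangian is then the full diagonal operator ${\rm diag}(\mathcal{L}_{1,odd},\mathcal{L}_{2,odd})$, nonnegative on a subspace of real codimension one, so ${\rm n}(\mathcal{L}_{1,odd})+{\rm n}(\mathcal{L}_{2,odd})\leq 1$ by the min-max principle, and your negative direction for $\mathcal{L}_{1,odd}$ then yields ${\rm n}(\mathcal{L}_{2,odd})=0$ at once. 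Either repair makes your proof complete.
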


Our paper is organized as follows:  in \ref{section-existence}, we establish the existence of $L$-periodic solutions $\varphi$ that solves \eqref{edo}. The spectral analysis for the operator $\mathcal{S}(0)$ is obtained in the Section \ref{section-spectral}. Finally, the transverse instability of periodic standing waves $\varphi$ is shown in Section \ref{section-transverseinstability}.\\

\textbf{Notation.} For $s\geq0$ and $L>0$, the Sobolev space
$H^s_{per}:=H^s_{per}(\mathbb{T}_L)$
consists of all real periodic distributions $f$ such that
$$
\|f\|^2_{H^s_{per}}:= L \sum_{k=-\infty}^{\infty}(1+k^2)^s|\hat{f}(k)|^2 <\infty
$$
where $\hat{f}$ is the periodic Fourier transform of $f$. The space $H^s_{per}$ is a  Hilbert space with the inner product denoted by $(\cdot, \cdot)_{H^s}$. When $s=0$, the space $H^s_{per}$ is isometrically isomorphic to the space  $L_{per}^2$ and will be denoted by $L^2_{per}:=H^0_{per}$ (see, e.g., \cite{Iorio}). The norm and inner product in $L^2_{per}$ will be denoted by $\|\cdot \|_{L^2}$ and $(\cdot, \cdot)_{L^2}$. For a complex function $f=f_1+if_2\equiv(f_1,f_2)$, we denote the space $\mathbb{H}^s_{per} := H^s_{per} \times H^s_{per}$ for all $s \geq 0$. Notation $H_{per,even}^s$ indicates the subspace of $H^s_{per}$ constituted by even periodic functions and $H_{per,odd}^s$ is the subspace of $H^s_{per}$ constituted by odd periodic functions.




Let $\mathcal{A}$ be a linear operator. We denote ${\rm n}(\mathcal{A})$ and ${\rm z}(\mathcal{A})$ as the number of negative eigenvalues and the dimension of the kernel of $\mathcal{A}$, respectively. Moreover, we denote $\mathcal{A}_{even}$ ($\mathcal{A}_{odd}$) as the operator $\mathcal{A}$ restricted in the even (odd) sector of its domain.

Given $z=\xi+i\varsigma \in \mathbb{C}$, we denote $|z|=\sqrt{\xi^2+\varsigma^2}.$

\section{Existence of periodic waves}\label{section-existence}

In this section, we are going to prove the existence of periodic wave solutions $\varphi$ for the equation \eqref{edo}. First, we obtain an even positive solution $\varphi$. Second, we prove the existence of an odd periodic solution $\varphi$ for the equation \eqref{edo}. 

\subsection{Even positive solutions} Let $L>0$ and $\alpha > 0$ be fixed.  To obtain an even periodic wave that solves \eqref{edo}, we use a variational approach in order to minimize a suitable  constrained functional. Indeed, let $\tau > 0$ be fixed and consider the set
\begin{equation*}
	\mathcal{Y}_\tau := \left\{ u \in \mathbb{H}^1_{per,even};\ \int_{0}^{L} |u|^{\alpha + 2} dx = \tau \right\}.
\end{equation*} 
\indent For $\omega > 0$, we define the functional $\mathcal{B}_\omega: \mathbb{H}^1_{per,even} \rightarrow \mathbb{R}$ given by
\begin{equation*}
	\mathcal{B}_\omega(u) := \frac{1}{2} \int_{0}^L |u_x|^2 + \omega |u|^2 dx.
\end{equation*}
Thus, we have the following result:
\begin{proposition}\label{minimizationproblem}
	Let $L>0$ be fixed and consider $\tau>0$ and $\omega > 0$. The minimization problem
	\begin{equation}\label{min-problem}
		\Gamma_\omega:= \inf_{u \in \mathcal{Y}_\tau} \mathcal{B}_\omega(u)
	\end{equation}
has at least one solution, that is, there exists a complex-valued function $\Phi \in \mathcal{Y}_\tau$ such that $\mathcal{B}_\omega(\Phi) = \Gamma_\omega$. Moreover, $\Phi$ satisfies the equation	
\begin{equation}\label{eq-min-problem}
	-\Phi'' + \omega \Phi - |\Phi|^\alpha \Phi = 0.
\end{equation}
\end{proposition}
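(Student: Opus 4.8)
The plan is to apply the direct method of the calculus of variations, exploiting that the torus $\mathbb{T}_L$ is compact. Since $\omega>0$, the functional $\mathcal{B}_\omega(u)=\frac12\int_0^L |u_x|^2+\omega|u|^2\,dx$ is, up to the factor $\frac12$, equivalent to the square of the $\mathbb{H}^1_{per}$-norm; in particular it is nonnegative and coercive, so $\Gamma_\omega\ge 0$. A first observation I would record is that $\Gamma_\omega>0$: the Sobolev embedding $\mathbb{H}^1_{per}\hookrightarrow L^{\alpha+2}_{per}$ (valid for every $\alpha>0$ on the one-dimensional torus, since $H^1_{per}\hookrightarrow C(\mathbb{T}_L)$) yields $\tau=\int_0^L|u|^{\alpha+2}\,dx\le C\|u\|_{\mathbb{H}^1_{per}}^{\alpha+2}$ for all $u\in\mathcal{Y}_\tau$, so every admissible $u$ is bounded away from $0$ in $\mathbb{H}^1_{per}$, whence $\mathcal{B}_\omega$ is bounded below on $\mathcal{Y}_\tau$ by a positive constant. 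I would then select a minimizing sequence $(u_n)\subset\mathcal{Y}_\tau$ with $\mathcal{B}_\omega(u_n)\to\Gamma_\omega$; by coercivity $(u_n)$ is bounded in $\mathbb{H}^1_{per,even}$.

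Next, using reflexivity of $\mathbb{H}^1_{per}$, I extract a subsequence (not relabeled) with $u_n\rightharpoonup\Phi$ weakly in $\mathbb{H}^1_{per}$. Because $\mathbb{H}^1_{per,even}$ is a closed subspace it is weakly closed, so $\Phi$ is even. The crucial compactness step is the Rellich--Kondrachov embedding $\mathbb{H}^1_{per}\hookrightarrow\hookrightarrow L^{\alpha+2}_{per}$, which is compact precisely because $\mathbb{T}_L$ is compact; along the subsequence $u_n\to\Phi$ strongly in $L^{\alpha+2}_{per}$, hence $\int_0^L|\Phi|^{\alpha+2}\,dx=\lim_n\int_0^L|u_n|^{\alpha+2}\,dx=\tau$, so $\Phi\in\mathcal{Y}_\tau$ and in particular $\Phi\not\equiv 0$. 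Since $\mathcal{B}_\omega$ is convex and continuous it is weakly lower semicontinuous, so $\mathcal{B}_\omega(\Phi)\le\liminf_n\mathcal{B}_\omega(u_n)=\Gamma_\omega$; combined with $\Phi\in\mathcal{Y}_\tau$ this forces $\mathcal{B}_\omega(\Phi)=\Gamma_\omega$, so $\Phi$ is a minimizer.

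Finally I would derive the Euler--Lagrange equation. Writing $G(u)=\int_0^L|u|^{\alpha+2}\,dx$, both $\mathcal{B}_\omega$ and $G$ are $C^1$ on $\mathbb{H}^1_{per}$ (here $\alpha>0$ guarantees that $t\mapsto|t|^{\alpha+2}$ is $C^1$ with derivative $(\alpha+2)|t|^\alpha t$), and $G'(\Phi)\ne 0$ since $\Phi\ne 0$. The Lagrange multiplier theorem then produces a constant $c$ with $\mathcal{B}_\omega'(\Phi)=c\,G'(\Phi)$, that is, $-\Phi''+\omega\Phi=c(\alpha+2)|\Phi|^\alpha\Phi$ in the weak sense; pairing with $\Phi$ gives $2\Gamma_\omega=c(\alpha+2)\tau$, so $c>0$. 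A scaling $\tilde\Phi=\theta\Phi$ with $\theta=\left(c(\alpha+2)\right)^{1/\alpha}$ absorbs the multiplier and yields a solution of $-\tilde\Phi''+\omega\tilde\Phi-|\tilde\Phi|^\alpha\tilde\Phi=0$ lying on the rescaled constraint level (which, after relabeling the free parameter, is again of the form $\mathcal{Y}_\tau$), and elliptic bootstrapping using $H^1_{per}\hookrightarrow L^\infty$ upgrades $\Phi$ to a classical, indeed smooth, solution of \eqref{eq-min-problem}. I expect the main obstacle to be the compactness argument of the second paragraph: one must pass the nonlinear constraint to the weak limit while keeping $\Phi\ne 0$, and this is exactly where the compactness of $\mathbb{T}_L$ is indispensable, since on the whole line one would instead need a concentration--compactness argument. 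A secondary delicate point is the normalization of the Lagrange multiplier by the scaling above so that the minimizer solves precisely \eqref{eq-min-problem}.
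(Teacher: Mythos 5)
Your proposal is correct and follows essentially the same route as the paper's proof: direct method with a bounded minimizing sequence, weak compactness in $\mathbb{H}^1_{per,even}$, the compact embedding into $\mathbb{L}^{\alpha+2}_{per}$ to pass the constraint to the limit, weak lower semicontinuity to conclude $\mathcal{B}_\omega(\Phi)=\Gamma_\omega$, and then the Lagrange multiplier theorem with a scaling $\Phi\mapsto c^{1/\alpha}\Phi$ to normalize the multiplier to $1$. Your additional remarks (that $\Gamma_\omega>0$, that the even subspace is weakly closed, and the elliptic bootstrap for regularity) are harmless refinements the paper leaves implicit.
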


\begin{proof}
	First, it is easy to see that the functional $\mathcal{B}_\omega$ induces an equivalent norm $\mathbb{H}^1_{per,even}$, that is, there exist positive constants $c_0$ and $c_1$ such that
	\begin{equation*}
		0 \leq c_0 \|u\|_{\mathbb{H}^1_{per}} \leq \sqrt{2 \mathcal{B}_\omega (u)} \leq c_1 \|u\|_{\mathbb{H}^1_{per}}.
	\end{equation*}

In addition, since $\mathcal{B}_\omega(u) \geq 0$ for all $u \in \mathbb{H}^1_{per,even}$, we have that $\Gamma_\omega \geq 0$. From the smoothness of the functional $\mathcal{B}_\omega$, we may consider a sequence of minimizers $(u_n)_{n \in \mathbb{N}} \subset \mathcal{Y}_\tau$ such that 
\begin{equation}\label{convergenceBw}
	\mathcal{B}_\omega (u_n) \rightarrow \Gamma_\omega.
\end{equation} 

From \eqref{convergenceBw}, we have that the sequence $(\mathcal{B}_\omega(u_n))_{n \in \mathbb{N}}$ is bounded $\mathbb{H}^1_{per,even}$. Since $\mathbb{H}^1_{per,even}$ is a reflexive Hilbert space, there exists $\Phi \in \mathbb{H}^1_{per,even}$ such that (modulus a subsequence) we have
\begin{equation*}
	u_n \rightharpoonup  \Phi \text{ weakly in } \mathbb{H}^1_{per,even}.
\end{equation*}

Using the compact embedding $\mathbb{H}^1_{per,even} \hookrightarrow \mathbb{L}^{\alpha+2}_{per,even}$ for all $\alpha > 0$, we have that 
\begin{equation*}
	u_n \rightarrow \Phi \in \mathbb{L}^{\alpha+2}_{per,even}.
\end{equation*}
Moreover, by using the mean value theorem and H\"older's inequality, we obtain
\begin{equation*}
	\bigg| \int_{0}^L |u_n|^{\alpha+2} - |\Phi|^{\alpha+2} dx \bigg|  \leq \int_{0}^L \left| |u_n|^{\alpha+2} - |\Phi|^{\alpha+2} \right| dx  \leq  2(\alpha+2) \left( \|u_n\|_{\mathbb{L}^{\alpha+2}_{per}}^{\alpha+1} + \|\Phi\|_{\mathbb{L}_{per}}^{\alpha+1} \right) \|u_n - \Phi\|_{\mathbb{L}^{\alpha+2}_{per}},
	\end{equation*}
implying that $\int_{0}^L |\Phi|^{\alpha+2} dx = \tau$, that is, $\Phi \in \mathcal{Y}_\tau$. Since $\mathcal{B}_\omega$ is lower semi-continuous,
\begin{equation*}
	\mathcal{B}_\omega(\Phi) \leq \liminf_{n \rightarrow \infty} \mathcal{B}_\omega(u_n),
\end{equation*} 
that is, $\mathcal{B}_\omega(\Phi) \leq \Gamma_\omega$.	On the other hand, once $\Phi \in \mathcal{Y}_\tau$, we conclude that $\mathcal{B}_\omega(\Phi) \geq \Gamma_\omega$. Therefore, we conclude that $\Phi \in \mathcal{Y}_\tau$ is a minimizer of the functional $\Gamma_\omega$, that is,
\begin{equation*}
	\mathcal{B}_\omega(\Phi) = \Gamma_\omega = \inf_{u \in \mathcal{Y}_\tau} \mathcal{B}_\omega(u).
\end{equation*}

Next, by Lagrange multiplier theorem, there exists a constant $c_2 = \frac{2 \mathcal{B}_\omega(\Phi)}{\tau} > 0$ such that
\begin{equation}\label{eqtiwhc2}
	- \Phi'' + \omega \Phi = c_2 |\Phi|^\alpha \Phi.
\end{equation}

A scaling argument $\Psi = \sqrt[\alpha]{c_2} \Phi$ allows us to choose $c_2 = 1$ in \eqref{eqtiwhc2}. Therefore, the complex function $\Phi$ is a periodic minimizer of the problem \eqref{min-problem} and satisfies the equation \eqref{eq-min-problem}.
\end{proof}

\begin{remark}
	Let $\Phi \in \mathbb{H}^1_{per,even}$ the minimizer obtained in the Proposition \ref{minimizationproblem}. It is easy to check that the function $e^{-i \theta} \Phi$ satisfies $\mathcal{B}_\omega(e^{-i\theta} \Phi) = \Gamma_\omega$ for all $\theta\in\mathbb{R}$. Consequently, $e^{-i\theta} \Phi$ satisfies equation 
 $(\ref{eq-min-problem})$ for all $\theta\in\mathbb{R}$.
\end{remark}

We show that $\Phi$ obtained in Proposition $\ref{minimizationproblem}$ can be considered of the form $\Phi=e^{i\theta_0}\varphi$ for some $\theta_0\in\mathbb{R}$ and $\varphi$ is a real periodic even function. Indeed, since $\Phi = \phi_1 + i\phi_2$ satisfies the equation \eqref{eq-min-problem}, we have that
	\begin{equation}\label{1}
		-\phi_1 '' + \omega \phi_1 - \left( \phi_1^2 + \phi_2^2 \right)^{\frac{\alpha}{2}} \phi_1 = 0,
	\end{equation}
and
	\begin{equation}\label{2}
	-\phi_2 '' + \omega \phi_2 - \left( \phi_1^2 + \phi_2^2 \right)^{\frac{\alpha}{2}} \phi_2 = 0.
\end{equation}

Multiplying the equations \eqref{1} and \eqref{2} by $\phi_2$ and $\phi_1$, respectively, and subtracting both results, we obtain 
\begin{equation*}
	-\phi_1 '' \phi_2 + \phi_2'' \phi_1 = 0,
\end{equation*}
that is,
\begin{equation*}
	-\phi_1 ' \phi_2 + \phi_2 ' \phi_1 = \tilde{c},
\end{equation*}
where $\tilde{c}$ is an integration constant. Being $\phi_1$ and $\phi_2$ even functions, we have that $\tilde{c} = 0$, that is, $-\phi_1' \phi_2 + \phi_2' \phi_1 = 0$, implying that $\phi_1 = r \phi_2$ for some $r \in \mathbb{R}$. Thus, $\Phi = (r+i)\phi_2 = e^{i \theta_0} \sqrt{1+r^2}\phi_2$, where $\theta_0$ is the principal argument of the complex number $r+i$. Therefore, if $\varphi = \sqrt{1+r^2} \phi_2$, we conclude that the minimizer $\Phi$ can be rewritten in the form $\Phi = e^{i \theta_0} \varphi$ for some $\theta_0 \in \mathbb{R}$.

The next step is to show that $\varphi$ is positive. 
In fact, we have to notice that the operator $\mathcal{L}$ can be obtained defining the functional $G(u) = E(u) + \omega F(u)$ where $E$ and $F$ are conserved quantities associated to the equation $(\ref{NLS-equation})$ given by
\begin{equation*}
    E(u) = \frac{1}{2} \int_0^L |u_x|^2 - \frac{2}{\alpha + 2} |u|^{\alpha+2} dx \; \; \text{ and } \; \; F(u) = \frac{1}{2} \int_0^L |u|^2 dx.
\end{equation*}
Then, we have that $G'(\varphi,0) = 0$, that is, $(\varphi,0)$ is a critical point of $G$. In addition, we have that $G''(\varphi,0) = \mathcal{L}$. By Proposition \ref{minimizationproblem} and the min-max principle (see \cite[Theorem XIII.2]{ReedSimon}) that ${\rm n}(\mathcal{L}_{even}) \leq 1$. Moreover, using the equation \eqref{edo}, we get
\begin{equation*}
    (\mathcal{L}_{1,even} \varphi, \varphi)_{L^2_{per}} = (\mathcal{L}_1 \varphi, \varphi)_{L^2_{per}}  = - \alpha \int_0^L |\varphi|^{\alpha+2} dx  = - \alpha \tau < 0,
\end{equation*}
that is, ${\rm n}(\mathcal{L}_{1,even}) \geq 1$ and we conclude in fact ${\rm n}(\mathcal{L}_{even}) = 1$. Moreover, since $\mathcal{L}$ has a diagonal form, we automatically obtain
\begin{equation}\label{Leven-even}
    {\rm n}(\mathcal{L}_{1,even}) = 1 \; \; \text{ and } \; \; {\rm n}(\mathcal{L}_{2,even}) = 0.
\end{equation}
By Krein-Ruttman's theorem, we deduce that $\varphi > 0$ and ${\rm z}(\mathcal{L}_{2,even}) = 1$.

\subsection{Odd solutions} Let $\alpha>0$ be a fixed even integer. Using similar arguments as in Subsection 3.1, we can obtain the existence of an odd solution $\varphi$ that satisfies \eqref{edo}. In fact, for $\upsilon > 0$, let us consider 
\begin{equation*}
	\mathcal{X}_\upsilon := \left\{ u \in \mathbb{H}^1_{per,odd};\ \int_{0}^{L} |u|^{\alpha + 2} dx = \upsilon \right\}.
\end{equation*} 

Define the functional $\mathcal{B}_\omega: \mathbb{H}^1_{per,odd} \rightarrow \mathbb{R}$ given by
\begin{equation*}
	\mathcal{B}_\omega(u) := \frac{1}{2} \int_{0}^L |u_x|^2 + \omega |u|^2 dx,
\end{equation*}
where $\omega>0$. We have the following proposition concerning the existence of odd periodic standing wave solutions for the equation $(\ref{NLS-equation})$. The proof follows by similar arguments as done in Proposition $\ref{minimizationproblem}$.
\begin{proposition}\label{minimizationproblem-odd}
	Let $L>0$ be fixed and consider $\upsilon>0$ and  $\omega > 0$. If $\alpha > 0$ is an even integer, the minimization problem
	\begin{equation*}\label{min-problem-odd}
		\Omega_\omega:= \inf_{u \in \mathcal{X}_\upsilon} \mathcal{B}_\omega(u)
	\end{equation*}
has at least one solution, that is, there exists a complex-valued function $\Psi \in \mathcal{X}_\upsilon$ such that $\mathcal{B}_\omega(\Psi) = \Omega_\omega$. Moreover, $\Psi$ satisfies the equation	
\begin{equation*}\label{eq-min-problem-odd}
	-\Psi'' + \omega \Psi - |\Psi|^\alpha \Psi = 0.
\end{equation*}
\end{proposition}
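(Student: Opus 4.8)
The plan is to mirror the proof of Proposition \ref{minimizationproblem}, since the authors explicitly signal that the odd case follows by analogous arguments. The structural ingredients are: (i) the functional $\mathcal{B}_\omega$ induces a norm equivalent to the $\mathbb{H}^1_{per}$ norm on the odd sector $\mathbb{H}^1_{per,odd}$, so a minimizing sequence is bounded; (ii) $\mathbb{H}^1_{per,odd}$ is a closed subspace of the reflexive Hilbert space $\mathbb{H}^1_{per}$, hence itself reflexive, which yields a weakly convergent subsequence $u_n \rightharpoonup \Psi$; (iii) the compact embedding $\mathbb{H}^1_{per,odd} \hookrightarrow \mathbb{L}^{\alpha+2}_{per,odd}$ passes the constraint to the limit, so $\Psi \in \mathcal{X}_\upsilon$; and (iv) weak lower semicontinuity of $\mathcal{B}_\omega$ forces $\mathcal{B}_\omega(\Psi) = \Omega_\omega$.

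First I would verify that $\mathbb{H}^1_{per,odd}$ is a closed linear subspace of $\mathbb{H}^1_{per}$, so that weak compactness and lower semicontinuity transfer directly; the only genuine check is that the weak limit $\Psi$ of a sequence of odd functions is again odd, which follows since the odd sector is weakly closed (being closed and convex). Then I would reproduce verbatim the mean-value-theorem-plus-H\"older estimate used in Proposition \ref{minimizationproblem} to show
\begin{equation*}
    \bigg| \int_0^L |u_n|^{\alpha+2} - |\Psi|^{\alpha+2}\, dx \bigg| \leq 2(\alpha+2)\left( \|u_n\|_{\mathbb{L}^{\alpha+2}_{per}}^{\alpha+1} + \|\Psi\|_{\mathbb{L}^{\alpha+2}_{per}}^{\alpha+1} \right) \|u_n - \Psi\|_{\mathbb{L}^{\alpha+2}_{per}} \longrightarrow 0,
\end{equation*}
giving $\int_0^L |\Psi|^{\alpha+2}\, dx = \upsilon$. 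Combining $\mathcal{B}_\omega(\Psi) \leq \liminf \mathcal{B}_\omega(u_n) = \Omega_\omega$ with $\mathcal{B}_\omega(\Psi) \geq \Omega_\omega$ (valid because $\Psi \in \mathcal{X}_\upsilon$) identifies $\Psi$ as a minimizer.

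Finally, the Euler--Lagrange step: the Lagrange multiplier theorem produces a constant $c_2 = \frac{2\mathcal{B}_\omega(\Psi)}{\upsilon} > 0$ with $-\Psi'' + \omega\Psi = c_2 |\Psi|^\alpha \Psi$, and a scaling $\widetilde{\Psi} = \sqrt[\alpha]{c_2}\,\Psi$ normalizes $c_2$ to $1$, yielding the desired equation $-\Psi'' + \omega\Psi - |\Psi|^\alpha\Psi = 0$.

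The one place where oddness is essential, and the main point to be careful about, is the hypothesis that $\alpha$ be an even integer. Here $\alpha$ even guarantees that the nonlinearity $|\Psi|^\alpha\Psi = \Psi^{\alpha+1}$ is a genuine polynomial of odd degree $\alpha+1$, so that $|\Psi|^\alpha\Psi$ is an odd function whenever $\Psi$ is odd; consequently the Euler--Lagrange equation is consistent with the odd constraint and the critical point stays in the odd sector. I expect this parity bookkeeping --- ensuring that the constrained critical point of $\mathcal{B}_\omega$ over $\mathbb{H}^1_{per,odd}$ genuinely solves the unconstrained equation \eqref{edo} rather than merely a projected version --- to be the only substantive difference from the even case, and hence the step deserving explicit mention.
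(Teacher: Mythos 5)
Your proof is correct and follows essentially the same route as the paper, which indeed disposes of the odd case by saying only that it ``follows by similar arguments'' as Proposition \ref{minimizationproblem}: steps (i)--(iv) and the Lagrange-multiplier-plus-scaling step are exactly the paper's argument transplanted to the odd sector, and your insistence that one must check that the constrained critical point solves the full equation, not merely its projection onto odd test functions, is a genuine point that the paper glosses over.

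One correction, however, to your final paragraph. The parity bookkeeping does \emph{not} use the hypothesis that $\alpha$ is an even integer. If $\Psi$ is odd, then $|\Psi(-x)| = |-\Psi(x)| = |\Psi(x)|$, so $|\Psi|^\alpha$ is even and $|\Psi|^\alpha \Psi$ is odd for \emph{every} $\alpha>0$; consequently the residual $R = -\Psi'' + \omega\Psi - c_2|\Psi|^\alpha\Psi$ is an odd function which annihilates all odd test functions (and, by parity, automatically annihilates all even ones as well), hence $R=0$ for any $\alpha>0$. Note also that for complex-valued $\Psi$ the identity $|\Psi|^\alpha\Psi = \Psi^{\alpha+1}$ you invoke is false in general; it only becomes available after the reduction $\Psi = e^{i\theta_1}\varphi$ to a real profile. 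The even-integer hypothesis is in fact irrelevant to the existence statement itself and is carried in the proposition for consistency with the rest of the paper: it is needed in the spectral analysis of Section \ref{section-spectral} for odd waves, where the count ${\rm n}(\mathcal{L}_1)=2$ is imported from \cite[Lemma 3.6]{AlvesNatali}, a result about the polynomial equation $-\varphi''+\varphi-\varphi^k=0$; for a sign-changing real solution $\varphi$ one has $|\varphi|^\alpha\varphi = \varphi^{\alpha+1}$ (a polynomial nonlinearity) precisely when $\alpha$ is an even integer.
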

\begin{flushright}
$\blacksquare$
\end{flushright}
\begin{remark}
    The same arguments used in the end of Subsection 3.1 can be repeated in this case to prove the existence of $\theta_1\in\mathbb{R}$ such that $\Psi = e^{i \theta_1} \varphi$, where $\varphi$ is an odd real solution that satisfies equation \eqref{edo}.
\end{remark}


\section{Spectral Analysis}\label{section-spectral}
From the definition of $\mathcal{S}(\kappa)$ in $(\ref{Sk})$ and the fact that
\begin{equation}\label{operator-Lcal}
    \mathcal{L}:= \left( \begin{array}{cc} \mathcal{L}_1 & 0 \\ 0 & \mathcal{L}_2 \end{array} \right),
\end{equation}
is a diagonal operator, we immediately obtain ${\rm n}(\mathcal{L})={\rm n}(\mathcal{S}(0))$ and ${\rm z}(\mathcal{L})={\rm z}(\mathcal{S}(0))$.

\subsection{Spectral analysis for even positive periodic solutions} Let $L>0$ and $\alpha > 0$ be fixed. Consider $\varphi$ the even positive solution of \eqref{edo} obtained in Proposition \ref{minimizationproblem}. By \eqref{Leven-even} we obtain that ${\rm n}(\mathcal{L}_{1,even}) = 1$ and ${\rm n}(\mathcal{L}_{2,even}) = 0$. Since $\mathcal{L}_{1,odd}\varphi' = 0$, we can conclude that $\lambda = 0$ is the first eigenvalue of the operator $\mathcal{L}_{1,odd}$ and thus ${\rm n}(\mathcal{L}_{1,odd}) = {\rm n}(\mathcal{L}_{2,odd}) = 0$. Therefore, we have
\begin{equation*}
    {\rm n}(\mathcal{L}) = {\rm n}(\mathcal{L}_{even}) + {\rm n}(\mathcal{L}_{odd}) = 1.
\end{equation*}
On the other hand, since $\varphi$ is positive we can use \cite[Lemma 3.7]{AlvesNatali} to deduce ${\rm z}(\mathcal{L}_1) = 1$. Now, we have that $0$ is a simple eigenvalue associated to the linear operator $\mathcal{L}_2$, and thus
\begin{equation*}
    {\rm Ker}(\mathcal{L}) = \left[ (\varphi',0), (0,\varphi) \right].
\end{equation*}
Summarizing the above, we have the following result:
\begin{proposition}\label{proposition-spectralanalysis-even}
    Let $L>0$ and $\alpha > 0$ be fixed. Consider $\varphi$ as the positive solution of \eqref{edo} obtained in the Proposition \ref{minimizationproblem}. Operator $\mathcal{L}$ defined in \eqref{operator-Lcal} has exactly one simple negative eigenvalue which is simple and zero is a double eigenvalue with eigenfunctions $(\varphi',0)$ and $(0,\varphi)$. Moreover, the remainder of the spectrum is constituted by a discrete set of eigenvalues.
\end{proposition}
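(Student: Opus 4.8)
The plan is to transfer the whole spectral problem for $\mathcal{L}$ onto the two scalar Hill operators $\mathcal{L}_1$ and $\mathcal{L}_2$. Since $\mathcal{L}$ in \eqref{operator-Lcal} is block-diagonal, its spectrum is the union $\sigma(\mathcal{L}_1) \cup \sigma(\mathcal{L}_2)$ and its eigenspaces split as products; in particular ${\rm n}(\mathcal{L}) = {\rm n}(\mathcal{L}_1) + {\rm n}(\mathcal{L}_2)$ and ${\rm z}(\mathcal{L}) = {\rm z}(\mathcal{L}_1) + {\rm z}(\mathcal{L}_2)$, with ${\rm Ker}(\mathcal{L}) = {\rm Ker}(\mathcal{L}_1) \times {\rm Ker}(\mathcal{L}_2)$. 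Because each $\mathcal{L}_j = -\partial_x^2 + \omega - c_j |\varphi|^\alpha$ (with $c_1 = \alpha+1$, $c_2 = 1$) is a Hill operator on $\mathbb{T}_L$ with smooth, even, periodic potential, it has compact resolvent, so its spectrum is a discrete sequence of eigenvalues accumulating only at $+\infty$. This already yields the final assertion that the remainder of $\sigma(\mathcal{L})$ is a discrete set of eigenvalues, and reduces everything else to counting negative eigenvalues and kernel dimensions of $\mathcal{L}_1$ and $\mathcal{L}_2$.

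For the negative eigenvalue count I would simply assemble the sector information obtained just above the statement. From \eqref{Leven-even} we have ${\rm n}(\mathcal{L}_{1,even}) = 1$ and ${\rm n}(\mathcal{L}_{2,even}) = 0$, while the relation $\mathcal{L}_{1,odd}\varphi' = 0$ together with the nodal structure of $\varphi'$ forces $0$ to be the lowest eigenvalue of $\mathcal{L}_{1,odd}$, giving ${\rm n}(\mathcal{L}_{1,odd}) = {\rm n}(\mathcal{L}_{2,odd}) = 0$. Summing over the even and odd sectors yields ${\rm n}(\mathcal{L}_1) = 1$ and ${\rm n}(\mathcal{L}_2) = 0$, hence ${\rm n}(\mathcal{L}) = 1$; this single negative eigenvalue is simple because it is contributed solely by the even sector of $\mathcal{L}_1$.

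For the kernel I would exhibit the two explicit null directions and then prove there are no others. Differentiating \eqref{edo} and using $\varphi > 0$ gives $\mathcal{L}_1 \varphi' = 0$, so $(\varphi',0) \in {\rm Ker}(\mathcal{L})$, while \eqref{edo} itself reads $\mathcal{L}_2 \varphi = 0$, so $(0,\varphi) \in {\rm Ker}(\mathcal{L})$. To rule out further kernel elements I would argue one operator at a time. Since $\varphi$ is positive it is the ground state of $\mathcal{L}_2$, so $0$ is the simple lowest eigenvalue and ${\rm z}(\mathcal{L}_2) = 1$ with ${\rm Ker}(\mathcal{L}_2) = [\varphi]$ (this is where Krein-Rutman enters). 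For $\mathcal{L}_1$, positivity of $\varphi$ lets me invoke \cite[Lemma 3.7]{AlvesNatali} to conclude ${\rm z}(\mathcal{L}_1) = 1$, i.e.\ ${\rm Ker}(\mathcal{L}_1) = [\varphi']$. Combining, ${\rm Ker}(\mathcal{L}) = [(\varphi',0),(0,\varphi)]$ and $0$ is a double eigenvalue of $\mathcal{L}$, completing the proof.

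The only genuinely delicate point is the claim ${\rm z}(\mathcal{L}_1) = 1$, namely that $\varphi'$ generates the \emph{entire} kernel of $\mathcal{L}_1$ rather than sitting inside a two-dimensional periodic eigenspace. This is a Floquet-theoretic statement: the eigenvalue $0$ of $\mathcal{L}_1$ must be located within the ordered periodic spectrum $\lambda_0 < \lambda_1 \le \lambda_2 < \cdots$, and one uses that $\varphi'$ has exactly two zeros per period (being the derivative of a single-hump even wave) together with ${\rm n}(\mathcal{L}_1) = 1$ to place $0 = \lambda_1$ as a simple periodic eigenvalue. I would route this mechanism through \cite[Lemma 3.7]{AlvesNatali}, whose hypothesis is exactly the positivity of $\varphi$ established at the end of Subsection 3.1; everything else in the argument is bookkeeping resting on the diagonal structure of $\mathcal{L}$ and the compact resolvent of the scalar Hill operators.
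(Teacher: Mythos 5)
Your proposal is correct and follows essentially the same route as the paper: the even-sector counts from \eqref{Leven-even}, the identity $\mathcal{L}_{1,odd}\varphi'=0$ to kill the odd-sector negative directions, \cite[Lemma 3.7]{AlvesNatali} for ${\rm z}(\mathcal{L}_1)=1$, and Krein--Rutman positivity for ${\rm Ker}(\mathcal{L}_2)=[\varphi]$, assembled through the block-diagonal structure of $\mathcal{L}$. Your added details (compact resolvent for discreteness, the Dirichlet/nodal argument in the odd sector) only make explicit what the paper leaves implicit.
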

\begin{flushright}
$\blacksquare$
\end{flushright}
\subsection{Spectral analysis for odd periodic solutions} Let $L>0$ be fixed and consider $\alpha > 0$ an even integer. Let $\varphi$ be the odd solution of \eqref{edo} obtained in Proposition \ref{minimizationproblem-odd}. The arguments in \cite[Lemma 3.6]{AlvesNatali} enable to conclude that ${\rm n}(\mathcal{L}_1) = 2$ but, according to the assumption \textbf{(H4)}, the linear operator $\mathcal{S}(0)$ cannot have more than one negative eigenvalue. To overcome this problem and since ${\rm n}(\mathcal{L})={\rm n}(\mathcal{S}(0))$, we shall consider the restriction of the operator $\mathcal{L}_{odd}$. First, we see from Krein-Rutman's theorem that the first eigenvalue of $\mathcal{L}_1$ is simple and its associated to a even and positive eigenfunction. Since ${\rm n}(\mathcal{L}_{1}) = {\rm n}(\mathcal{L}_{1,odd}) + {\rm n}(\mathcal{L}_{1,even})$ and $1\leq{\rm n}(\mathcal{L}_{1,even}) \leq 2$, we obtain that ${\rm n}(\mathcal{L}_{1,odd}) \leq 1$. On the other hand, we have
\begin{equation*}
    ( \mathcal{L}_{1,odd} \varphi,\varphi)_{L^2_{per,odd}} = (\mathcal{L}_1 \varphi,\varphi)_{L^2_{per}} = - \int_0^L \alpha |\varphi|^{\alpha+2} = - \upsilon \alpha < 0,
\end{equation*}
so that ${\rm n}(\mathcal{L}_{1,odd}) = 1$.

Let $\lambda_0^{(i)}$ and $\lambda_1^{(i)}$ be the first two simple eigenvalues associated to the linearized operator $\mathcal{L}_{i,odd}$, $i=1,2$. Since $\mathcal{L}_{1,odd} < \mathcal{L}_{2,odd}$, we can use the standard comparison theorem (see \cite[Theorem 2.2.2]{Eastham}) to obtain
\begin{equation*}
    \lambda_0^{(1)} < \lambda_0^{(2)} \; \;  \; \text{ and } \;  \; \; \lambda_1^{(1)} < \lambda_1^{(2)}.
\end{equation*}
The fact that ${\rm n}(\mathcal{L}_{1,odd}) = 1$ implies automatically that $\lambda_0^{(1)} < 0$ and $\lambda_1^{(1)} = 0$. Thus, $\lambda_1^{(2)} > 0$ and since $\mathcal{L}_2 \varphi = 0$, we obtain $\lambda_0^{(2)} =0$, so that ${\rm n}(\mathcal{L}_{2,odd})=0$.

Therefore, since ${\rm n}(\mathcal{L}_{1,odd}) = 1$ and ${\rm n}(\mathcal{L}_{2,odd}) = 0$, we conclude ${\rm n}(\mathcal{L}_{odd})=1$ and we have the following result:

\begin{proposition}\label{proposition-spectralanalysis-odd}
    Let $L>0$ be fixed and consider $\alpha > 0$ an even integer number. If $\varphi$ is the odd solution of the equation \eqref{edo} obtained in the Proposition \ref{minimizationproblem-odd}, then the restricted operator $\mathcal{L}_{odd}$ has exactly one negative eigenvalue which is simple and zero is a simple eigenvalue with eigenfunction $(0,\varphi)$. Moreover, the remainder of the spectrum is constituted by a discrete set of eigenvalues.
\end{proposition}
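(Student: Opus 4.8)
The plan is to exploit the block-diagonal structure of $\mathcal{L}$ so that everything reduces to the two scalar Hill operators $\mathcal{L}_{1,odd}$ and $\mathcal{L}_{2,odd}$ acting on the odd sector, which is just a Dirichlet problem on a half period $[0,L/2]$. Since $\mathcal{L}_{odd}=\mathrm{diag}(\mathcal{L}_{1,odd},\mathcal{L}_{2,odd})$, one has ${\rm n}(\mathcal{L}_{odd})={\rm n}(\mathcal{L}_{1,odd})+{\rm n}(\mathcal{L}_{2,odd})$ and $\Ker(\mathcal{L}_{odd})=\Ker(\mathcal{L}_{1,odd})\oplus\Ker(\mathcal{L}_{2,odd})$, so it is enough to count negative eigenvalues and describe the kernels of the two diagonal entries separately, and then invoke that a Hill operator on the torus has compact resolvent to guarantee that the rest of the spectrum is a discrete set.

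First I would pin down ${\rm n}(\mathcal{L}_{1,odd})=1$. By \cite[Lemma 3.6]{AlvesNatali} we have ${\rm n}(\mathcal{L}_1)=2$, and Krein--Rutman guarantees that the lowest eigenvalue of $\mathcal{L}_1$ is simple with an even, strictly positive eigenfunction, so that ${\rm n}(\mathcal{L}_{1,even})\geq1$. Combining the sector decomposition ${\rm n}(\mathcal{L}_1)={\rm n}(\mathcal{L}_{1,even})+{\rm n}(\mathcal{L}_{1,odd})$ with $1\leq {\rm n}(\mathcal{L}_{1,even})\leq2$ forces ${\rm n}(\mathcal{L}_{1,odd})\leq1$; the test $(\mathcal{L}_{1,odd}\varphi,\varphi)_{L^2_{per}}=-\alpha\upsilon<0$, valid because $\varphi$ is odd, gives the reverse bound ${\rm n}(\mathcal{L}_{1,odd})\geq1$, whence ${\rm n}(\mathcal{L}_{1,odd})=1$ and its unique negative eigenvalue $\lambda_0^{(1)}$ is simple. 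Differentiating \eqref{edo} shows $\mathcal{L}_1\varphi'=0$ with $\varphi'$ \emph{even}; assuming ${\rm z}(\mathcal{L}_1)=1$, i.e. $\Ker(\mathcal{L}_1)=[\varphi']$, the odd sector contains no kernel element, so $\Ker(\mathcal{L}_{1,odd})=\{0\}$ and the second odd eigenvalue satisfies $\lambda_1^{(1)}>0$.

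Next I would treat $\mathcal{L}_{2,odd}$. Since $\varphi$ solves \eqref{edo} we have $\mathcal{L}_2\varphi=0$, and as $\varphi$ is odd this exhibits $0$ as an eigenvalue of $\mathcal{L}_{2,odd}$. The potentials of $\mathcal{L}_1$ and $\mathcal{L}_2$ differ by $-\alpha|\varphi|^\alpha\leq0$, so $\mathcal{L}_{1,odd}<\mathcal{L}_{2,odd}$ strictly; the Sturm--Liouville comparison theorem \cite[Theorem 2.2.2]{Eastham} then yields $\lambda_j^{(1)}<\lambda_j^{(2)}$ for every $j$. In particular $\lambda_1^{(2)}>\lambda_1^{(1)}>0$, so every eigenvalue of $\mathcal{L}_{2,odd}$ beyond the ground state is strictly positive. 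Because $0$ is an eigenvalue, it can only be the ground state, i.e. $\lambda_0^{(2)}=0$ is simple; consequently ${\rm n}(\mathcal{L}_{2,odd})=0$ and $\Ker(\mathcal{L}_{2,odd})=[\varphi]$.

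Putting the two computations together gives ${\rm n}(\mathcal{L}_{odd})=1+0=1$ with simple negative eigenvalue, and $\Ker(\mathcal{L}_{odd})=\{0\}\oplus[\varphi]=[(0,\varphi)]$, so that $0$ is simple with eigenfunction $(0,\varphi)$, while compactness of the resolvent makes the remainder of $\sigma(\mathcal{L}_{odd})$ a discrete set of eigenvalues; this is exactly the assertion. The step I expect to be the real obstacle is securing the two spectral inputs for the \emph{non-explicit} variational profile $\varphi$: the count ${\rm n}(\mathcal{L}_1)=2$ and, above all, the simplicity ${\rm z}(\mathcal{L}_1)=1$ needed to force $\lambda_1^{(1)}>0$ and hence the triviality of $\Ker(\mathcal{L}_{1,odd})$ (the bare statement $\lambda_1^{(1)}=0$ would be incompatible with $0$ being a \emph{simple} eigenvalue of $\mathcal{L}_{odd}$). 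These are Floquet/oscillation facts about the periodic operator $-\partial_x^2+\omega-(\alpha+1)|\varphi|^\alpha$, and without an explicit elliptic-function formula they must be extracted from the cited lemmas or from a band-edge analysis; the additivity of the negative index across the even/odd sectors, resting on the reflection invariance of $\mathcal{L}_1$, also has to be justified with care.
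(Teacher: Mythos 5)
Your proposal follows the paper's own proof almost step for step: the parity additivity ${\rm n}(\mathcal{L}_{odd})={\rm n}(\mathcal{L}_{1,odd})+{\rm n}(\mathcal{L}_{2,odd})$, the count ${\rm n}(\mathcal{L}_{1,odd})=1$ deduced from ${\rm n}(\mathcal{L}_1)=2$ (\cite{AlvesNatali}), Krein--Rutman for the even positive ground state together with the test computation $(\mathcal{L}_1\varphi,\varphi)_{L^2_{per}}=-\alpha\upsilon<0$, and then the strict comparison $\mathcal{L}_{1,odd}<\mathcal{L}_{2,odd}$ via \cite[Theorem 2.2.2]{Eastham} combined with $\mathcal{L}_2\varphi=0$ to force $\lambda_0^{(2)}=0$, hence ${\rm n}(\mathcal{L}_{2,odd})=0$ and $\Ker(\mathcal{L}_{2,odd})=[\varphi]$. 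So this is essentially the same route, not a different one.

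The one point of divergence is the step you flag, and there you are more careful than the paper. The paper asserts that ${\rm n}(\mathcal{L}_{1,odd})=1$ ``implies automatically'' $\lambda_0^{(1)}<0$ and $\lambda_1^{(1)}=0$. As you observe, only $\lambda_1^{(1)}\geq 0$ follows; and if $\lambda_1^{(1)}=0$ held literally, $\Ker(\mathcal{L}_{1,odd})$ would be nontrivial and zero would be a double eigenvalue of $\mathcal{L}_{odd}$, contradicting the simplicity claimed in Proposition \ref{proposition-spectralanalysis-odd} itself. (The inequality $\lambda_1^{(1)}\geq0$ is all that is needed to get $\lambda_1^{(2)}>0$, ${\rm n}(\mathcal{L}_{2,odd})=0$, and hypothesis \textbf{(H4)}, so the transverse instability theorems are unaffected; but the simplicity of zero does require $\lambda_1^{(1)}>0$.) Your repair --- $\Ker(\mathcal{L}_1)=[\varphi']$ with $\varphi'$ even, so that $\Ker(\mathcal{L}_{1,odd})=\{0\}$ --- is the natural one, but as you yourself note it rests on ${\rm z}(\mathcal{L}_1)=1$, which you assume rather than prove: the paper invokes \cite[Lemma 3.7]{AlvesNatali} for this nondegeneracy only for the positive even wave of Proposition \ref{minimizationproblem}, and supplies no analogue for the sign-changing odd wave. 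That is the genuine gap in your write-up; it is, however, precisely the defect in the paper's own argument as well, so closing it (via the cited preprint or a Floquet/oscillation argument showing that the periodic kernel of $\mathcal{L}_1$ is spanned by the even function $\varphi'$) is what both proofs need in order to justify the simplicity statement in full.
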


\begin{flushright}
$\blacksquare$
\end{flushright}

\section{transverse instability}\label{section-transverseinstability}
Here, we are going to prove our main results about transverse instability of periodic standing wave solutions for the NLS equation \eqref{NLS-equation}. To do so, we need to check that all assumptions $\textbf{(H0)}-\textbf{(H4)}$ are verified.

Initially, we can proof the following result with respect to the coercitivity of the operator $\mathcal{S}(\kappa)$ for $|\kappa|$ large enough. Since the linear operator $\mathcal{S}(\kappa)$ has a square term $\kappa^2$, we only consider $\kappa\geq0$ by symmetry.

\begin{lemma}\label{lema-H1}
Let $L>0$ and $\alpha > 0$ be fixed. Consider the wave solution $\varphi$ of the equation \eqref{edo} given by Proposition $\ref{minimizationproblem}$ or $\ref{minimizationproblem-odd}$. There exist $K>0$ and $\beta > 0$ such that $ \mathcal{S}(\kappa) \geq \beta {\rm Id}$ for $\kappa \geq K.$
\end{lemma}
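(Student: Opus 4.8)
The plan is to exploit that the matrix $J$ is orthogonal, so that the conjugation defining $\mathcal{S}(\kappa)$ in \eqref{Sk} leaves the associated quadratic form essentially unchanged, and then to combine this with the uniform boundedness of the profile $\varphi$. Since $\varphi$ solves \eqref{edo} and is a continuous $L$-periodic function, it is bounded, so $\|\varphi\|_{L^\infty} < \infty$; this is the only analytic input needed.

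First I would reduce the matrix estimate to two scalar ones. Since $J^{-1} = J^* = -J$, one has $(J^{-1})^* = J$ and $\|J{\rm w}\|_{\mathbb{L}^2_{per}} = \|{\rm w}\|_{\mathbb{L}^2_{per}}$ for all ${\rm w} = (w_1,w_2) \in \mathbb{H}^2_{per}$. Using the definition of $\mathcal{S}(\kappa)$ in \eqref{Sk}, a direct computation gives
\begin{equation*}
    (\mathcal{S}(\kappa){\rm w}, {\rm w})_{\mathbb{L}^2_{per}} = \left( \left( \begin{array}{cc} \mathcal{L}_1 + \kappa^2 & 0 \\ 0 & \mathcal{L}_2 + \kappa^2 \end{array} \right) J{\rm w}, J{\rm w} \right)_{\mathbb{L}^2_{per}}.
\end{equation*}
Writing $\tilde{\rm w} = (\tilde w_1, \tilde w_2) = J{\rm w}$ and recalling $\|\tilde{\rm w}\|_{\mathbb{L}^2_{per}} = \|{\rm w}\|_{\mathbb{L}^2_{per}}$, the desired estimate $(\mathcal{S}(\kappa){\rm w}, {\rm w})_{\mathbb{L}^2_{per}} \geq \beta \|{\rm w}\|_{\mathbb{L}^2_{per}}^2$ reduces to the two coercivity bounds $((\mathcal{L}_1 + \kappa^2)\tilde w_1, \tilde w_1)_{L^2_{per}} \geq \beta \|\tilde w_1\|_{L^2}^2$ and $((\mathcal{L}_2 + \kappa^2)\tilde w_2, \tilde w_2)_{L^2_{per}} \geq \beta \|\tilde w_2\|_{L^2}^2$.

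Next I would establish these scalar estimates uniformly. Setting $M := (\alpha+1)\|\varphi\|_{L^\infty}^\alpha < \infty$ and using $\int_0^L |v'|^2\,dx \geq 0$, for every real $v \in H^2_{per}$ and $i \in \{1,2\}$ one has
\begin{equation*}
    ((\mathcal{L}_i + \kappa^2)v, v)_{L^2_{per}} = \int_0^L |v'|^2 + (\omega + \kappa^2)|v|^2 - c_i|\varphi|^\alpha |v|^2 \, dx \geq (\omega + \kappa^2 - M)\|v\|_{L^2}^2,
\end{equation*}
where $c_1 = \alpha+1$ and $c_2 = 1 \leq \alpha+1$, so the same lower bound with $M$ serves for both operators. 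Finally I would fix the constants: take $\beta = 1$ and choose $K > 0$ with $K^2 \geq 1 + M - \omega$. Then for all $\kappa \geq K$ one gets $\omega + \kappa^2 - M \geq 1 = \beta$, which yields both scalar bounds and hence $\mathcal{S}(\kappa) \geq \beta\,{\rm Id}$ for $\kappa \geq K$. I do not expect a genuine obstacle here: the single point requiring care is the reduction through the orthogonal matrix $J$, after which the conclusion follows immediately from the uniform boundedness of the potential $|\varphi|^\alpha$ together with the nonnegativity of $-\partial_x^2$.
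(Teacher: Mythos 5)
Your proof is correct, and its first half runs parallel to the paper's: both arguments rest on the observation that conjugation by the orthogonal matrix $J$ does not change the quadratic form, i.e.\ $(\mathcal{S}(\kappa)\mathrm{w},\mathrm{w})_{\mathbb{L}^2_{per}} = \bigl( \mathrm{diag}(\mathcal{L}_1+\kappa^2,\mathcal{L}_2+\kappa^2) J\mathrm{w}, J\mathrm{w}\bigr)_{\mathbb{L}^2_{per}}$ with $\|J\mathrm{w}\|_{\mathbb{L}^2_{per}}=\|\mathrm{w}\|_{\mathbb{L}^2_{per}}$ (the paper does this by the equivalent device of swapping components, writing the form as $(\mathcal{L}(g,f),(g,f))_{\mathbb{L}^2_{per}}+\kappa^2\|(f,g)\|^2_{\mathbb{L}^2_{per}}$). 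Where you diverge is in how the $\kappa$-independent part is bounded below. The paper invokes the min-max principle: $\mathcal{L}\geq -\lambda_0\,{\rm Id}$, where $-\lambda_0$ is the lowest eigenvalue of the self-adjoint operator $\mathcal{L}$, and then takes $K>\sqrt{\lambda_0}$. You instead discard $\int_0^L|v'|^2\,dx\geq 0$ and bound the potential pointwise, getting $\mathcal{L}_i+\kappa^2\geq(\omega+\kappa^2-M)\,{\rm Id}$ with $M=(\alpha+1)\|\varphi\|_{L^\infty}^\alpha$, which needs only the boundedness of the continuous periodic profile $\varphi$ and no spectral information whatsoever. Your route is therefore more elementary and self-contained (it does not lean on Section \ref{section-spectral} or on \cite{ReedSimon}), produces fully explicit constants $\beta=1$ and $K^2\geq 1+M-\omega$, and incidentally avoids a small infelicity in the paper, where $\beta=\kappa^2-\lambda_0$ is written as if it were a single constant although it depends on $\kappa$. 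What the paper's version buys is a sharper threshold, since $-\lambda_0\geq\omega-M$ forces $\sqrt{\lambda_0}\leq\sqrt{M-\omega}$ whenever the latter is defined, and it keeps the constant $\lambda_0$ aligned with the spectral quantity that reappears in assumption \textbf{(H4)}; but for the existence statement of Lemma \ref{lema-H1}, either bound is perfectly adequate.
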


\begin{proof}
Consider the periodic wave solution $\varphi$ of the equation \eqref{edo} given by Proposition $\ref{minimizationproblem}$ or $\ref{minimizationproblem-odd}$. In addition, let $-\lambda_0$ be the first eigenvalue of the operator $\mathcal{L}$ defined in \eqref{operator-Lcal}. We obtain by min-max theorem 
\begin{align*}
    \left( \mathcal{S}(\kappa)(f,g), (f,g) \right)_{\mathbb{L}^2_{per}} & = \left( (\mathcal{L}_2 + \kappa^2) f, f\right)_{L^2_{per}} + \left( (\mathcal{L}_1 + \kappa^2) g,g \right)_{L^2_{per}} \\
    & = (\mathcal{L}_2 f, f)_{L^2_{per}} + (\mathcal{L}_1 g,g )_{L^2_{per}} + \kappa^2 (f,f)_{L^2_{per}} + \kappa^2 (g,g)_{L^2_{per}} \\
    & =  \left(\mathcal{L} (g,f), (g,f) \right)_{\mathbb{L}^2_{per}} + \kappa^2 \left( (f,g), (f,g) \right)_{\mathbb{L}^2_{per}} \\
    & \geq - \lambda_0 \|(f,g)\|_{\mathbb{L}^2_{per}}^2 + \kappa^2 \|(f,g)\|_{\mathbb{L}^2_{per}}^2 \\
    & = (\kappa^2 - \lambda_0) \|(f,g)\|_{\mathbb{L}^2_{per}}^2.
    \end{align*}
Letting $K > \sqrt{\lambda_0}$, we have that $\beta = \kappa^2 - \lambda_0$ is positive for all  $\kappa > K$. 
\end{proof}


The next result provides us with information regarding the increase in relation to the operator $\mathcal{S}(\kappa)$ in terms of $\kappa$ and the behavior of the derivative of $\mathcal{S}(\kappa)$ with respect to $\kappa.$

\begin{lemma}\label{lema-H3}
    For every $\kappa_1 \geq \kappa_2 \geq 0$, we have $\mathcal{S}(\kappa_1) \geq \mathcal{S}(\kappa_2)$. In addition, if $\mathcal{S}'(\kappa)$ denotes the derivative of $\mathcal{S}(\kappa)$ with respect to $\kappa$, we have that $(\mathcal{S}'(\kappa) {\rm w},{\rm w}) > 0$ for all ${\rm w} \in \mathbb{H}^2_{per}$ and $\kappa > 0$.
\end{lemma}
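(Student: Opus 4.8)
The plan is to reduce both assertions to a single structural observation: the dependence of $\mathcal{S}(\kappa)$ on $\kappa$ enters only through the scalar term $\kappa^2\,{\rm Id}$. Starting from the definition \eqref{Sk}, the scalar operator $\kappa^2\,{\rm Id}$ commutes with $J$, so conjugation by $J$ leaves it untouched; consequently
\[
\mathcal{S}(\kappa) = J^{-1}\left( \begin{array}{cc} \mathcal{L}_1 & 0 \\ 0 & \mathcal{L}_2 \end{array} \right) J + \kappa^2\, J^{-1}J = \mathcal{S}(0) + \kappa^2\,{\rm Id}.
\]
This identity is exactly consistent with the quadratic-form computation carried out in the proof of Lemma \ref{lema-H1}, where the $\kappa$-terms were seen to contribute precisely $\kappa^2 \|(f,g)\|^2_{\mathbb{L}^2_{per}}$. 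I would record this decomposition first, since everything else follows from it.

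For the monotonicity claim, I would then take $\kappa_1 \geq \kappa_2 \geq 0$ and an arbitrary ${\rm w} = (f,g) \in \mathbb{H}^2_{per}$, and compute
\[
\left( (\mathcal{S}(\kappa_1) - \mathcal{S}(\kappa_2)){\rm w}, {\rm w} \right)_{\mathbb{L}^2_{per}} = (\kappa_1^2 - \kappa_2^2)\, \|{\rm w}\|^2_{\mathbb{L}^2_{per}} \geq 0,
\]
where the inequality uses that $\kappa_1 \geq \kappa_2 \geq 0$ forces $\kappa_1^2 \geq \kappa_2^2$. This yields $\mathcal{S}(\kappa_1) \geq \mathcal{S}(\kappa_2)$ in the sense of quadratic forms.

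For the statement on the derivative, differentiating $\mathcal{S}(\kappa) = \mathcal{S}(0) + \kappa^2\,{\rm Id}$ in $\kappa$ gives simply $\mathcal{S}'(\kappa) = 2\kappa\,{\rm Id}$, whence for $\kappa > 0$ and ${\rm w} \neq 0$,
\[
(\mathcal{S}'(\kappa){\rm w}, {\rm w})_{\mathbb{L}^2_{per}} = 2\kappa\, \|{\rm w}\|^2_{\mathbb{L}^2_{per}} > 0.
\]
There is no real obstacle here: once the scalar decomposition of $\mathcal{S}(\kappa)$ is isolated, both parts are immediate, and the only point requiring minor care is that the strict positivity in the last display holds for ${\rm w} \neq 0$ (the case ${\rm w}=0$ being trivial). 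This strict inequality is in fact stronger than what assumption \textbf{(H3)} requires, since it holds for every nonzero ${\rm w}$ rather than only for those in $\ker \mathcal{S}(\kappa)$.
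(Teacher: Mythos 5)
Your proposal is correct and follows essentially the same route as the paper: the monotonicity is obtained from the quadratic-form identity $\left( (\mathcal{S}(\kappa_1)-\mathcal{S}(\kappa_2))\,{\rm w},{\rm w}\right)_{\mathbb{L}^2_{per}} = (\kappa_1^2-\kappa_2^2)\|{\rm w}\|^2_{\mathbb{L}^2_{per}}$, and the derivative statement from $\mathcal{S}'(\kappa)=2\kappa\,{\rm Id}$, exactly as in the paper's proof; your explicit decomposition $\mathcal{S}(\kappa)=\mathcal{S}(0)+\kappa^2\,{\rm Id}$ just packages upfront what the paper uses implicitly. Your remark that the strict inequality requires ${\rm w}\neq 0$ is a small but valid refinement of the statement as written.
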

\begin{proof}
For $\kappa_1 \geq \kappa_2 \geq 0$, we see that
\begin{equation*}
    \left( ( \mathcal{S}(\kappa_1) - \mathcal{S}(\kappa_2) ) (f,g), (f,g) \right)_{\mathbb{L}^2_{per}} =  (\kappa_1^2 - \kappa_2^2) \|(f,g)\|_{\mathbb{L}^2_{per}}^2 \geq 0,
\end{equation*}
for all $(f,g) \in \mathbb{L}^2_{per}$, that is, $\mathcal{S}(\kappa_1) \geq \mathcal{S}(\kappa_2)$ for all $\kappa_1 \geq \kappa_2 \geq 0$. Moreover, for all $\kappa >0$ we have
\begin{equation*}
    \mathcal{S}'(\kappa) = \left( \begin{array}{cc} 2 \kappa & 0 \\ 0 & 2 \kappa \end{array} \right),
\end{equation*}
so that $(\mathcal{S}'(\kappa) {\rm w}, {\rm w})_{L^2_{per}} > 0$ for all ${\rm w} \in \mathbb{H}^2_{per}$ and $\kappa > 0$.
\end{proof}

Lemmas \ref{lema-H1} and \ref{lema-H3} and the spectral analysis obtained in Section \ref{section-spectral} are useful to obtain the proofs of Theorems $\ref{main-teo-evensolution}$ and $\ref{main-teo-oddsolution}$.\\

\begin{proof}[Proof of theorems \ref{main-teo-evensolution} and \ref{main-teo-oddsolution}]
Let $L>0$ and $\alpha > 0$ be fixed. First of all, we see that condition (\textbf{H0}) is verified since $\mathcal{S}(\kappa)$ is clearly a self-adjoint operator in $\mathbb{L}_{per}^2$  and the reason that  $\mathcal{L}+\kappa^2$ is also a self-adjoint operator in the same space for all $\kappa\in\mathbb{R}$. Consider then $\varphi$ as the even positive periodic solution obtained by Proposition \ref{minimizationproblem}. From the Lemma \ref{lema-H1} we have that (\textbf{H1}) is satisfied. As we are working in the periodic scenario, the essential spectrum $\sigma_{ess}(\mathcal{S}(\kappa))$ is an empty set for all $\kappa\in\mathbb{R}$ and assumption $\textbf{(H2)}$ is established. Condition (\textbf{H3}) is obtained by Lemma \ref{lema-H3}. We also have that condition (\textbf{H4}) is verified by Proposition \ref{proposition-spectralanalysis-even} and the fact that ${\rm n}(\mathcal{L})={\rm n}(\mathcal{S}(0))$.

On the other hand, let $\alpha > 0$ be an even integer. If $\varphi$ is the odd periodic solution obtained in the Proposition \ref{minimizationproblem-odd}, we can use the same arguments as used above for positive solutions to obtain that assumptions (\textbf{H0})-(\textbf{H3}) are easily verified. In addition, by Proposition \ref{proposition-spectralanalysis-odd}, ${\rm n}(\mathcal{L}_{odd})={\rm n}(\mathcal{S}(0)_{odd})=1$ has requested in the assumption (\textbf{H4}). The theorem is now proved.
\end{proof}

\section*{Acknowledgments}
F. Natali is partially supported by CNPq/Brazil (grant 303907/2021-5). G. E. Bittencourt Moraes is supported by Coordenação de Aperfeiçoamento de Pessoal de Nível Superior (CAPES)/Brazil - Finance code 001.

\end{document}